\newtheorem{theorem}{Theorem}[section]
\newtheorem{proposition}[theorem]{Proposition}
\newtheorem{definition}[theorem]{Definition}
\newtheorem{lemma}[theorem]{Lemma}
\newtheorem*{ack*}{ACKNOWLEDGEMENTS}
\newcommand{\opname}[1]{\operatorname{\mathsf{#1}}}
\renewcommand{\mod}{\opname{mod}\nolimits}
\newcommand{\proj}{\opname{proj}\nolimits}
\newcommand{\Mod}{\opname{Mod}\nolimits}
\newcommand{\Hom}{\opname{Hom}}
\newcommand{\End}{\opname{End}}
\numberwithin{equation}{section}
\begin{document}

\title[]{The ST correspondence for proper non-positive dg algebras}

\author{Houjun Zhang}
\thanks{MSC2020: 16E35, 16E45.}
\thanks{Key words: Simple-minded collection, Silting object, ST correspondence, Differential graded algebra}
\address{Houjun Zhang, Department of Mathematics, Nanjing University, Nanjing 210093, P. R. China
}
\email{zhanghoujun@nju.edu.cn}

\begin{abstract} Let $A$ be a proper non-positive dg algebra over a field $k$. For a simple-minded collection of the finite-dimensional derived category $\mathcal{D}_{fd}(A)$, we construct a 'dual' silting object of the perfect derived category $\mathrm{per}(A)$ by using the Koszul duality for dg algebras. This induces a one-to-one correspondence between the equivalence classes of silting objects in $\mathrm{per}(A)$ and algebraic t-structures of $\mathcal{D}_{fd}(A)$.
\end{abstract}

\maketitle

\section{Introduction}\label{s:introduction}
\medskip
Let $A$ be a proper non-positive dg algebra over a field $k$. In this paper we establish the following correspondence: there is a one-to-one correspondence between

$(1)$ equivalence classes of silting objects in the perfect derived category $\mathrm{per}(A)$,

$(2)$ algebraic $t$-structures of the finite-dimensional derived category $\mathcal{D}_{fd}(A)$,\\
which we will call the ST correspondence(S=silting objects, T=t-structures). Such a correspondence was first established by Keller and Vossieck \cite{KellerVossieck88} for path algebras of Dynkin quivers and in recent years it was studied in the following settings: homologically smooth non-positive dg algebras \cite{KellerNicolas11}, finite-dimensional algebras \cite{KoenigYang14,BuanReitenThomas12,RickardRouquier17} and proper non-positive dg algebras over an algebraically closed field $k$ \cite{SuYang19}.

Since algebraic t-structures are generated by simple-minded collections \cite{Al-Nofayee09,KoenigYang14,Schnurer13}, the key point to establish the above  ST correspondence is to construct a silting object of $\mathrm{per}(A)$ from a simple-minded collection of $\mathcal{D}_{fd}(A)$. There are two approaches to achieve this. The first approach was provided by Rickard. Let $A$ be a finite-dimensional symmetric algebra. Given a simple-minded collection of $\mathcal{D}^{b}(\mod A)(=\mathcal{D}_{fd}(A))$, he \cite{Rickard02} constructed a sequence of morphisms in the unbounded derived category $\mathcal{D}(\Mod A)$ of $A$-modules and showed that the homotopy colimit of this sequence is a tilting object of $K^{b}(\proj A)(=\mathrm{per}(A))$. This method was later shown to be applicable to homologically smooth non-positive dg algebras with finite-dimensional zeroth cohomology \cite{KellerNicolas11} and finite-dimensional algebras \cite{KoenigYang14}, but instead of a tilting object one obtains a silting object. The second approach was provided by Su and Yang \cite{SuYang19} and it uses Koszul duality between dg algebras and $A_{\infty}$-algebras.

In this paper, we follow Su and Yang's approach, but on the Koszul dual side, we use dg algebras instead of $A_{\infty}$-algebras. Our main result is:

\begin{theorem} Let $A$ be a proper non-positive dg algebra over a field $k$ and let $\{X_{1},\ldots,X_{n}\}$ be a simple-minded collection of $\mathcal{D}_{fd}(A)$ with endomorphism algebras $R_{1},\ldots,R_{n}$. Then there exists a unique (up to isomorphism) silting object $M = M_{1} \oplus\ldots\oplus M_{n}$ of $\mathrm{per}(A)$ such that for $1 \leq i, j \leq n$ and $p \in Z$
\[\Hom _{\mathcal{D}_{fd}(A)}(M_{i}, \Sigma^{p}X_{j})=\left\{\begin{array}{ll}

_{R_{j}}R_{j}&\mathrm{if}\ i=j\ \mathrm{and}\ p=0,\\

0&\mathrm{otherwise}.

\end{array}\right.\]
\end{theorem}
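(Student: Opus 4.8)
The strategy is Koszul duality: transfer the problem to the Koszul dual dg algebra, where the required object becomes visible.

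Write $X := X_1 \oplus \cdots \oplus X_n$ and let $B$ be the Koszul dual dg algebra of $(A,X)$, obtained from $\REnd_{\mathcal{D}_{fd}(A)}(X)$ by the dg-algebra Koszul duality set up earlier. I will use the following properties of $B$. Because $\{X_1,\dots,X_n\}$ is a simple-minded collection, the off-diagonal morphisms of $\REnd(X)$ lie in nonzero degrees, so $H^0(B)\cong R_1\times\cdots\times R_n$ and there is a surjective dg algebra morphism $B\twoheadrightarrow R_1\times\cdots\times R_n$; the associated idempotents $e_i$ give the indecomposable projective dg modules $P_i:=e_iB$ with $B=P_1\oplus\cdots\oplus P_n$ and $\End_{\mathcal{D}(B)}(P_i)\cong R_i$, and through the surjection the simple dg modules $S_i$ with $\End_{\mathcal{D}(B^{\mathrm{op}})}(S_i)\cong R_i$. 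Since $A$ is proper, $B$ (and $B^{\mathrm{op}}$) is homologically smooth, so $\mathcal{D}_{fd}(B)\subseteq\mathrm{per}(B)$ and $S_i\in\mathrm{per}(B^{\mathrm{op}})$; also $\{P_1,\dots,P_n\}$ classically generates $\mathrm{per}(B)$. The Koszul duality equivalence takes the shape $F\colon\mathcal{D}_{fd}(A)\xrightarrow{\ \sim\ }\mathrm{per}(B)$ with $F(X_i)\cong P_i$.

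The first step is to pin down $\mathrm{per}(A)$ inside $\mathrm{per}(B)$ under $F$: I claim $F$ restricts to an equivalence $\mathrm{per}(A)\xrightarrow{\ \sim\ }\mathcal{D}_{fd}(B)$. Using that $A$ is proper one has $\mathrm{per}(A)\subseteq\mathcal{D}_{fd}(A)$, and an object $Y\in\mathcal{D}_{fd}(A)$ is perfect if and only if $\bigoplus_p\Hom_{\mathcal{D}_{fd}(A)}(Y,\Sigma^pX_i)$ is finite-dimensional for every $i$ (because $\{X_i\}$ generates $\mathcal{D}_{fd}(A)$); transporting along $F$, this condition reads $\bigoplus_p\Hom_{\mathcal{D}(B)}(F(Y),\Sigma^pP_i)$ finite-dimensional for every $i$, i.e.\ $F(Y)\in\mathcal{D}_{fd}(B)$. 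Next I construct, on the $B$-side, the collection $\{N_1,\dots,N_n\}$ dual to $\{P_1,\dots,P_n\}$: put $N_i:=\RHom_{B^{\mathrm{op}}}(S_i,B)$, which lies in $\mathrm{per}(B)$ by smoothness and in $\mathcal{D}_{fd}(B)$ because $A$ is proper; since $P_j=e_jB$ is projective and $(N_i)^{\vee}\cong S_i$, a direct computation gives
\[
\Hom_{\mathcal{D}(B)}(N_i,\Sigma^pP_j)\;\cong\;H^p\!\big(e_jS_i\big),
\]
which is ${}_{R_j}R_j$ for $i=j,\ p=0$ and $0$ otherwise. Finally set $M_i:=F^{-1}(N_i)\in\mathrm{per}(A)$ and $M:=M_1\oplus\cdots\oplus M_n$; by the first step $M\in\mathrm{per}(A)$.

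Applying $F$ to the displayed formula yields exactly the asserted values of $\Hom_{\mathcal{D}_{fd}(A)}(M_i,\Sigma^pX_j)$, with the $R_j$-module structure carried over along $\End_{\mathcal{D}(B)}(P_j)\cong R_j$. That $M$ is silting in $\mathrm{per}(A)$ follows by transport: $\thick(M)=\mathrm{per}(A)$ since $\{N_i\}$ generates $\mathcal{D}_{fd}(B)$ (being dual to the generating set $\{P_i\}$), and $\Hom_{\mathrm{per}(A)}(M,\Sigma^pM)\cong\bigoplus_{i,j}\Hom_{\mathcal{D}(B)}(N_i,\Sigma^pN_j)\cong\bigoplus_{i,j}\Hom_{\mathcal{D}(B^{\mathrm{op}})}(S_j,\Sigma^pS_i)$ (via the $B$-duality $(-)^{\vee}$), which vanishes for $p>0$ because the simple dg modules over the Koszul dual satisfy the relevant orthogonality — this is exactly where non-positivity of $A$ is used. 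Uniqueness is immediate: if $M'$ is another silting object satisfying the same Hom-formula, then $F(M')$ lies in $\mathcal{D}_{fd}(B)$ and is dual to $\{P_1,\dots,P_n\}$; but each direct summand of such a dual collection is determined up to isomorphism by its pairing against every $P_j$, hence $F(M')\cong\bigoplus_iN_i$ and $M'\cong M$.

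I expect the main obstacle to be everything that enters the statement "$F$ exists and restricts to $\mathrm{per}(A)\xrightarrow{\ \sim\ }\mathcal{D}_{fd}(B)$", i.e.\ the Koszul duality itself: producing an honest \emph{dg} (rather than $A_\infty$) model $B$ of the Koszul dual, with the correct grading, establishing $F$, and — since $B$ is typically not proper, so no naive duality is available — verifying by hand that the finiteness conditions distinguishing $\mathrm{per}$ from $\mathcal{D}_{fd}$ are interchanged by $F$ and that they convert the silting condition into the expected vanishing of $B$-side morphisms. This, together with the construction and basic properties of $B$, is the substantive work done in the preceding sections.
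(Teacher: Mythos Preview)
Your route is genuinely different from the paper's. You take a \emph{single} Koszul dual $B=\REnd_A(X)$ and then use the $B$-linear duality $\RHom_{B^{\mathrm{op}}}(-,B)$ to produce the $N_i$. The paper instead performs \emph{two} Koszul dualities and uses the Nakayama functor: first it dualises with respect to the \emph{standard} simples $S_j$ of $H^0(A)$ to get $A^*$, identifies $\Phi(I_i)$ with the simple $A^*$-modules (Lemma~2.4), and thereby obtains an explicit equivalence $\mathrm{thick}(D({}_AA))\simeq\mathcal D_{fd}(A^*)$; only then does it dualise with respect to $\Phi(X_i)$, pull back the simples of $\mathcal X$, and apply $\nu^{-1}$. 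The point of this detour is exactly to avoid the step you flag as the ``main obstacle''.

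There is a real gap at that step. Your claim that $Y\in\mathcal D_{fd}(A)$ lies in $\mathrm{per}(A)$ \emph{iff} $\bigoplus_p\Hom(Y,\Sigma^pX_i)$ is finite-dimensional for all $i$ is not justified by ``because $\{X_i\}$ generates $\mathcal D_{fd}(A)$'': generation lets you pass from the $X_i$ to all of $\mathcal D_{fd}(A)$ in the \emph{second} argument, but it does not tell you that homological finiteness into $\mathcal D_{fd}(A)$ forces membership in $\mathrm{per}(A)$. Equivalently, you have $F(\mathrm{per}(A))\subseteq\mathcal D_{fd}(B)$ but not the reverse inclusion $\mathcal D_{fd}(B)\subseteq\mathrm{thick}(F(A))$; establishing the latter amounts to showing that the simple $B$-modules lie in $\mathrm{thick}(F(A))$, which is essentially the content of the theorem. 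The paper circumvents this by matching generators explicitly: $\Phi$ sends the $I_i$ to the simple $A^*$-modules, so $\mathrm{thick}(D({}_AA))\xrightarrow{\sim}\mathcal D_{fd}(A^*)$ is obtained on the nose, and then $\nu$ identifies $\mathrm{per}(A)$ with $\mathrm{thick}(D({}_AA))$.

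Two further points: the paper never asserts that the Koszul dual is homologically smooth, so your use of smoothness (to get $S_i\in\mathrm{per}(B^{\mathrm{op}})$ and $N_i\in\mathrm{per}(B)$) is not supplied by the ``preceding sections''. And the surjection $B\twoheadrightarrow R_1\times\cdots\times R_n$ you invoke to produce the simple $B$-modules is not available in general for a dg algebra with cohomology in non-negative degrees; the paper instead relies on Keller--Nicol\'as (Lemmas~2.3 and~2.4) to construct and characterise simples over cohomologically strictly positive dg algebras.
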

One main reason for Su and Yang to use $A_{\infty}$-algebras is that the simple modules over the $A_{\infty}$-Koszul dual are easily constructed and characterised. In this paper, we use the results of Keller and Nicol\'{a}s \cite{KellerNicolas13} to construct and characterise the simple modules over the dg-Koszul dual. In this way, the constraint in \cite[Theorem 1.1]{SuYang19} that the given simple-minded collection is elementary is removed.

\noindent {\bf Acknowledgements} The author would like to thank Dong Yang for introducing me this topic and for his consistent encouragement and support. He also thanks Zongzhen Xie for
her careful reading of the paper and for her helpful comments. The project is funded by China Postdoctoral Science Foundation (2020M681540).

\section{Preliminaries}

In this section, we recall some notions and results about silting objects, simple-minded collections and dg algebras. Let $\mathcal{C}$ be a triangulated category with shift functor $\Sigma$. Throughout this paper, let $k$ be a field and let $D=\mathrm{Hom}_{k}(?,k)$ be the $k$-dual.

\subsection{Silting objects.}
For a subcategory or a set of objects $S$ of $\mathcal{C}$, we denote by $\mathrm{thick}(S)$ the $\emph thick~~subcategory$ of $\mathcal{C}$ generated by $S$, i.e., the smallest triangulated subcategory of $\mathcal{C}$ which contains $S$ and which is closed under isomorphisms and direct summands. In the following, we give the definition of silting objects. For more details we refer to \cite{AiharaIyama12} and \cite{KellerVossieck88}.

\begin{definition} Let $M$ be an object in $\mathcal{C}$.

$(1)$ $M$ is said to be \emph{presilting} if $\mathrm{Hom}_{\mathcal{C}}(M,\Sigma^{n}M)=0$ for all $n>0$.

$(2)$ $M$ is said to be \emph{silting} if it is presilting and $\mathcal{C}=\mathrm{thick}(M)$.
\end{definition}

\noindent Two silting objects $M$ and $M^{\prime}$ of $\mathcal{C}$ are said to be $\emph equivalent$ if $\mathrm{add}(M)=\mathrm{add}(M^{\prime})$.

\subsection{Simple-minded collections.}

\begin{definition} A collection $X_{1},\ldots,X_{n}$ of objects of $\mathcal{C}$ is said to be \emph{simple-minded} if the following conditions hold for $1\leq i,j\leq n$

$(1)$ $\mathrm{Hom}_{\mathcal{C}}(X_{i},\Sigma^{m}X_{j})=0$ for all $m<0$,

$(2)$ $\mathrm{End}(X_{i})$ is a division algebra and $\mathrm{Hom}_{\mathcal{C}}(X_{i},X_{j})$ vanishes for $i\neq j$,

$(3)$ $X_{1},\ldots,X_{n}$ generate $\mathcal{C}$, i.e., $\mathcal{C}=\mathrm{thick}(X_{1},\ldots,X_{n})$.
\end{definition}
For example, for a finite-dimensional algebra $\Lambda$, a complete collection of pairwise non-isomorphic simple modules is a simple-minded collection in $\mathcal{D}^{b}(\mod \Lambda)$.
\subsection{Non-positive dg algebras.}
Let $A$ be a dg $k$-algebra and let $M$ and $N$ be two dg $A$-modules. The \emph{morphism~complex} of $M$ and $N$ is $\mathcal{H}om_{A}(M,N)$, whose degree $n$ component consists of those $A$-linear maps from $M$ to $N$ which are homogeneous of degree $n$. The differential of $\mathcal{H}om_{A}(M, N)$ is given by
\begin{center}
$ d(f) = d_{N}\circ f- (-1)^{|f|}f \circ d_{M}$,
\end{center}
where $f \in \mathcal{H}om_{A}(M, N)$ is homogeneous of degree $|f|$. Let $\mathcal{C}_{dg}(A)$ be the dg category of dg $A$-modules and let $K(A)$ be the homotopy category. Then
$$
\Hom_{\mathcal{C}_{dg}(A)}(M,N)=\mathcal{H}om_{A}(M,N), \mathrm{Hom}_{K(A)}(M,N)=H^{0}\mathcal{H}om_{A}(M,N).
$$
The derived category $\mathcal{D}(A)$ of dg $A$-modules is defined as the triangle quotient of $K(A)$ by acyclic dg $A$-modules, see \cite{Keller94,Keller06d}. Let $\mathrm{per}(A) = \mathrm{thick}_{\mathcal{D}(A)}(A_{A})$ be the
thick subcategory of $\mathcal{D}(A)$ generated by $A_{A}$ and $\mathcal{D}_{fd}(A)$ the full subcategory of $\mathcal{D}(A)$ consisting of those dg $A$-modules with finite-dimensional total cohomology. In the case $A$ is a $k$-algebra, we have $\mathcal{D}(A)=\mathcal{D}(\Mod A)$, $\mathrm{per}(A)\simeq K^{b}(\proj A)$. If $A$ is a finite-dimensional algebra over the field $k$, then $\mathcal{D}_{fd}(A)\simeq\mathcal{D}^{b}(\mod A)$.


A dg $A$-module $M$ is said to be \emph{K}-\emph{projective} if $\mathcal{H}om_{A}(M, N)$ is acyclic for any acyclic dg $A$-module $N$. If $M$ is $K$-projective, then there
is a canonical isomorphism
\begin{center}
$\mathrm{Hom}_{\mathcal{D}(A)}(M,\Sigma^{p}N)\cong\mathrm{Hom}_{K(A)}(M,\Sigma^{p}N)$.
\end{center}
In particular, let $\mathcal{E}nd_{A}(M)=\mathcal{H}om_{A}(M,M)$ denote the dg endomorphism algebra of $M$; then
\begin{center}
$\mathrm{Hom}_{\mathcal{D}(A)}(M,\Sigma^{p}M)=H^{p}\mathcal{E}nd_{A}(M)$.
\end{center}
For a dg $A$-module $M$, a $K$-projective resolution of $M$ is a quasi-isomorphism $\mathbf{p}M\rightarrow M$ of dg $A$-modules with $\mathbf{p}M$ being $K$-projective. By \cite[Theorem 3.1]{Keller94}, $K$-projective resolutions always exist.


A dg $k$-algebra $A$ is said to be $\emph non$-$\emph positive$ if the degree $i$ component $A^{i}$ vanishes for all $i>0$ and $\emph proper$ if $A$ has finite-dimensional total cohomology. Let $A$ be a proper non-positive dg $k$-algebra. Then both $\mathcal{D}_{fd}(A)$ and $\mathrm{per}(A)$ are Krull-Schmidt. Moreover, $\mathrm{per}(A)\subseteq \mathcal{D}_{fd}(A)$ and $\mathrm{thick}(D(_{A}A))\subseteq \mathcal{D}_{fd}(A)$. There is a triangle functor $\nu: \mathcal{D}(A)\longrightarrow \mathcal{D}(A)$ which restricts to a triangle equivalence $\nu: \mathrm{per}(A)\longrightarrow \mathrm{thick}(D(_{A}A))$. We have the Auslander-Reiten formula
\begin{center}
$D\mathrm{Hom}(M,N)\cong \mathrm{Hom}(N,\nu(M))$
\end{center}
for $M\in \mathrm{per}(A)$ and $N\in \mathcal{D}(A)$. See \cite[Section 10]{Keller94}.

Let $S_{1},\ldots,S_{n}$ be a complete set of pairwise non-isomorphic simple $H^{0}(A)$-modules and view them as dg $A$-modules via the homomorphism $A\longrightarrow H^{0}(A)$. Recall that $\{S_{1},\ldots,S_{n}\}$ is a simple-minded collection in $\mathcal{D}_{fd}(A)$, see \cite[Theorem A.1 (c)]{BrustleYang13}. Let $U_{1},\ldots,U_{n}$ be their endomorphism algebras.

Since $\End_{\mathcal{D}(A)}(A)=H^{0}(A)$, the functor $H^{0}=\Hom_{\mathcal{D}(A)}(A,?)$ restricts to an equivalence $\mathrm{add}_{\mathcal{D}(A)}(A)\longrightarrow \proj H^{0}(A)$. Therefore there are indecomposable objects $P_{1},\ldots,P_{n}\in \mathrm{add}_{\mathcal{D}(A)}(A)\subseteq \mathrm{per}(A)$ such that $H^{0}(P_{1}),\ldots,H^{0}(P_{n})$ are projective covers of $S_{1},\ldots,S_{n}$, respectively. Thus, we obtain that
\[\Hom _{\mathcal{D}(A)}(P_{i}, \Sigma^{p}S_{j})=\left\{\begin{array}{ll}

_{U_{j}}U_{j}&\mathrm{if}\ i=j\ \mathrm{and}\ p=0,\\

0&\mathrm{otherwise}.

\end{array}\right.\]
Moreover, the collection $\{S_{1},\ldots,S_{n}\}$ is determined by this property. Namely, fix $1\leq j\leq n$ and let $M\in \mathcal{D}(A)$ be such that

\[\Hom _{\mathcal{D}(A)}(P_{i}, \Sigma^{p}M)=\left\{\begin{array}{ll}

_{U_{j}}U_{j}&\mathrm{if}\ i=j\ \mathrm{and}\ p=0,\\

0&\mathrm{otherwise},

\end{array}\right.\]
then $M\cong S_{j}$ in $\mathcal{D}(A)$. See \cite[Section 5.1]{SuYang19}.

Let $I_{i}=\nu(P_{i})$ for $1\leq i\leq n$. Then by the Auslander-Reiten formula we have

\[\Hom _{\mathcal{D}(A)}(S_{j}, \Sigma^{p}I_{i})=\left\{\begin{array}{ll}

(U_{j})_{U_{j}}&\mathrm{if}\ i=j\ \mathrm{and}\ p=0,\\

0&\mathrm{otherwise}.

\end{array}\right. \eqno{(\ast)}\]

\subsection{Cohomologically strictly positive dg algebras.}

A dg $k$-algebra is said to be $\emph cohomologically$ $\emph strictly$ $\emph positive$ if $H^{p}(A)=0$ for all $p<0$ and $H^{0}(A)$ is a semi-simple $k$-algebra.

Let $A$ be a cohomologically strictly positive dg $k$-algebra. Since $H^{0}(A)$ is semi-simple, by \cite[Lemma 4.5]{KellerNicolas13}, there exists a decomposition into indecomposables $A=\mathop{\bigoplus}\limits_{i=1}^{n}e_{i}A$ of $A$ in $\mathcal{D}(A)$ such that $H^{0}A=\mathop{\bigoplus}\limits_{i=1}^{n}H^{0}(e_{i}A)$ is a decomposition into simples of $H^{0}A$ in $\Mod H^{0}A$, where $e_{1},\ldots,e_{n}$ is a complete set of primitive orthogonal idempotents of $A$. We view $H^{0}(e_{i}A)$ as a graded $H^{\ast}(A)$-module via the graded algebra homomorphism $H^{\ast}(A)\longrightarrow H^{0}(A)$. Then by \cite[Corollary 4.7]{KellerNicolas13}, there is a unique (up to isomorphism) dg $A$-module $S_{i}$
such that the graded $H^{\ast}(A)$-module $H^{\ast}(S_{i})$ is isomorphic to $H^{0}(e_{i}A)$ for any $1\leq i\leq n$. We call $S_{1},\ldots,S_{n}$ the simple $A$-modules.

\begin{lemma}\cite[Lemma 3.12]{AdachiMizunoYang19} $\mathop{\bigoplus}\limits_{i=1}^{n}S_{i}$ is a silting object in $\mathcal{D}_{fd}(A)$.
\end{lemma}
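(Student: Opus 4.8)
The plan is to show that $\bigoplus_{i=1}^n S_i$ is both presilting and a generator of $\cd_{fd}(A)$, where $A$ is cohomologically strictly positive. First I would establish that $\Hom_{\cd(A)}(S_i, \Sigma^p S_j) = 0$ for all $p > 0$. The key input is the construction of the $S_i$ via \cite{KellerNicolas13}: $S_i$ is the unique dg module whose cohomology $H^\ast(S_i)$ is the simple graded $H^\ast(A)$-module $H^0(e_i A)$, concentrated in degree $0$. Since $A$ is non-negative cohomologically ($H^p(A) = 0$ for $p < 0$) with $H^0(A)$ semisimple, I expect one can choose a minimal model or use the standard filtration arguments so that a well-chosen representative of $S_i$ is built from $A$ by a process that only involves shifts $\Sigma^{-q} e_j A$ with $q \geq 0$; then computing $\Hom(S_i, \Sigma^p S_j)$ reduces to cohomology in non-positive degrees of complexes of $H^0(A)$-modules, forcing vanishing for $p > 0$. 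Concretely, I would use that $\Hom_{\cd(A)}(e_i A, \Sigma^p S_j) = H^p(S_j)_{e_i} = H^0(e_i A)\text{-component}$, which is $U_j$ if $i = j$ and $p = 0$ and $0$ otherwise; so the $e_i A$'s behave like projective covers of the $S_j$'s in the appropriate t-structure, and this is exactly the situation dual to the non-positive case recalled in the excerpt.

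Next I would verify the generation condition $\cd_{fd}(A) = \thick(S_1 \oplus \cdots \oplus S_n)$. For this, note that any object $N \in \cd_{fd}(A)$ has finite-dimensional total cohomology $H^\ast(N)$, which is a finitely generated graded $H^\ast(A)$-module. Using the t-structure on $\cd(A)$ whose heart is $\mod H^0(A)$ (available since $A$ is cohomologically non-negative, by the analogue of the standard t-structure), one can filter $N$ by its cohomology objects $H^p(N)$, each of which is a finite-dimensional $H^0(A)$-module and hence a finite iterated extension of the $S_i$. Assembling these finitely many truncation triangles shows $N \in \thick(S_1 \oplus \cdots \oplus S_n)$. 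Combined with the presilting property, this gives that $\bigoplus_{i=1}^n S_i$ is a silting object of $\cd_{fd}(A)$.

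I would expect the main obstacle to be the presilting verification, specifically controlling $\Hom_{\cd(A)}(S_i, \Sigma^p S_j)$ for $p > 0$: unlike the non-positive case where one has honest projective resolutions concentrated in the right degrees, here one must work with the dg modules $S_i$ produced abstractly by the Keller–Nicol\'as machinery, and one needs a sufficiently explicit model (e.g.\ a minimal $A_\infty$-module structure on $H^\ast(S_i)$, or a K-projective resolution with a degree bound) to run the Hom computation. The cleanest route is probably to transport the whole problem via Koszul/derived equivalence to a setting where $H^\ast(S_i)$ itself is the object in question and positivity of $A$ translates into a connectivity bound that kills the relevant Ext groups; alternatively, one invokes directly \cite[Lemma 3.12]{AdachiMizunoYang19}, whose proof presumably carries out exactly this analysis. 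Either way, the generation step is comparatively routine given the cohomological finiteness hypothesis, so the weight of the argument rests on the Hom-vanishing in positive degrees.
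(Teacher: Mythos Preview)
The paper gives no proof of this lemma: it is stated with the citation \cite[Lemma 3.12]{AdachiMizunoYang19} and used as a black box, so there is nothing on the paper's side to compare your argument against. Your sketch is therefore strictly more than what the paper offers.

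As to the content of your sketch: the generation step is essentially correct. For a dg algebra with cohomology concentrated in non-negative degrees one does have the standard $t$-structure with heart $\Mod H^0(A)$, and since $H^0(A)$ is semisimple any object of $\cd_{fd}(A)$ is filtered, via truncation triangles, by finite direct sums of the $S_i$; the uniqueness clause of \cite[Corollary 4.7]{KellerNicolas13} is what identifies the heart objects with the $S_i$. For the presilting step you correctly flag the real issue: being in the heart of a $t$-structure only gives $\Hom(S_i,\Sigma^p S_j)=0$ for $p<0$, not for $p>0$, so one genuinely needs an extra ingredient. Your proposed route---building each $S_i$ from summands $\Sigma^{-q}e_jA$ with $q\geq 0$ and then using $\Hom(\Sigma^{-q}e_jA,\Sigma^p S_j)=H^{p+q}(S_j e_j)=0$ for $p>0$---is exactly the kind of argument carried out in \cite{KellerNicolas13} via the co-$t$-structure (weight structure) on $\per(A)$ for positive dg algebras; making it precise amounts to reproving that part of their paper, which is why the present paper simply cites the result rather than redoing it.
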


The following lemma is very important.

\begin{lemma}
Fix $1\leq i\leq n$. If for any $1\leq j\leq n$ and $p\in \mathbb{Z}$, we have
\[\Hom_{\mathcal{D}(A)}(e_{j}A,\Sigma^{p}M)=\left\{\begin{array}{ll}
(T_{i})_{T_{i}}&\mathrm{if}\ j=i\ \mathrm{and}\ p=0,\\

0&\mathrm{otherwise},

\end{array}\right.\]
where $T_{i}$ is the endomorphism algebra of $e_{i}A$.
Then $M$ is isomorphic to $S_{i}$ in $\mathcal{D}(A)$.
\end{lemma}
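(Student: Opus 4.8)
The plan is to compute the cohomology of $M$ as a graded $H^{*}(A)$-module and then appeal to the uniqueness part of \cite[Corollary 4.7]{KellerNicolas13}, which characterises $S_{i}$ by the sole requirement that $H^{*}(S_{i})\cong H^{0}(e_{i}A)$ as graded $H^{*}(A)$-modules. Since $A$ is $K$-projective over itself and $A=\bigoplus_{j=1}^{n}e_{j}A$ in $\mathcal{D}(A)$, for every $p\in\mathbb{Z}$ one has
\[
H^{p}(M)\cong\Hom_{\mathcal{D}(A)}(A,\Sigma^{p}M)=\bigoplus_{j=1}^{n}\Hom_{\mathcal{D}(A)}(e_{j}A,\Sigma^{p}M).
\]
By hypothesis the right-hand side vanishes for $p\neq 0$ and equals $\Hom_{\mathcal{D}(A)}(e_{i}A,M)\cong(T_{i})_{T_{i}}$ for $p=0$. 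Hence $H^{*}(M)$ is concentrated in cohomological degree $0$; since $H^{*}(A)$ is cohomologically strictly positive, $H^{>0}(A)$ is forced to act by zero, so $H^{*}(M)$ is, as a graded $H^{*}(A)$-module, nothing but $H^{0}(M)$ regarded as a module over the semisimple algebra $H^{0}(A)$, and the target cohomology $H^{0}(e_{i}A)$ is likewise of this shape.

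It remains to identify the $H^{0}(A)$-module $H^{0}(M)$ with the simple module $H^{0}(e_{i}A)$. Under the identification $\End_{\mathcal{D}(A)}(A)=H^{0}(A)$ the idempotent $e_{j}$ acts on $H^{0}(M)=\Hom_{\mathcal{D}(A)}(A,M)$ by precomposition with the projection $A\twoheadrightarrow e_{j}A\hookrightarrow A$, so $H^{0}(M)e_{j}=\Hom_{\mathcal{D}(A)}(e_{j}A,M)$, which by hypothesis is $(T_{i})_{T_{i}}$ for $j=i$ and $0$ otherwise. Applying the same reasoning to $e_{i}A$ (legitimate since each $e_{j}A$ is $K$-projective) gives $H^{0}(e_{i}A)e_{j}=\Hom_{\mathcal{D}(A)}(e_{j}A,e_{i}A)$, with $H^{0}(e_{i}A)e_{i}=\End_{\mathcal{D}(A)}(e_{i}A)=T_{i}$; inspecting the primitive-idempotent decomposition of the semisimple algebra $H^{0}(A)$ one sees that the components of $H^{0}(e_{i}A)$ at the $e_{j}$ agree with those of $H^{0}(M)$. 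As a finite-dimensional module over a semisimple algebra is determined up to isomorphism by its components at a complete set of primitive orthogonal idempotents, we get $H^{0}(M)\cong H^{0}(e_{i}A)$, hence $H^{*}(M)\cong H^{0}(e_{i}A)$ as graded $H^{*}(A)$-modules, and therefore $M\cong S_{i}$ in $\mathcal{D}(A)$ by \cite[Corollary 4.7]{KellerNicolas13}.

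I expect the only delicate point to be the middle step: one must make sure the orthogonality hypothesis pins down $H^{0}(M)$ as the actual $i$-th simple $H^{0}(A)$-module, not merely as a $k$-space (or $T_{i}$-module) of the correct size — that is, that its components at all of $e_{1},\dots,e_{n}$ coincide with those of $H^{0}(e_{i}A)$. This is transparent in the situation relevant to the ST correspondence, where $H^{0}(A)$ is a product of the division algebras $T_{1},\dots,T_{n}$ (in the non-basic case the hypothesis is simply not satisfiable). Everything else is routine bookkeeping with $K$-projectivity together with the cited uniqueness result of Keller and Nicol\'{a}s.
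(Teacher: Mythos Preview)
Your proof is correct and follows essentially the same route as the paper: compute $H^{*}(M)$ via $\Hom_{\mathcal{D}(A)}(e_{j}A,\Sigma^{p}M)=H^{p}(Me_{j})$, observe that it is the simple graded $H^{*}(A)$-module $H^{0}(e_{i}A)$, and invoke the uniqueness part of \cite[Corollary~4.7]{KellerNicolas13}. You simply spell out in detail the step the paper compresses into the single sentence ``It follows that $H^{*}(M)$ is a simple module over $H^{*}(A)$ with cohomologies concentrated in degree~$0$.'' One small remark on your closing caveat: the hypothesis need not force $H^{0}(A)$ to be basic globally; it only forces $e_{i}A$ to be non-isomorphic to every other $e_{j}A$ (else $\Hom(e_{j}A,M)\cong\Hom(e_{i}A,M)\neq 0$), and that alone already gives $H^{0}(e_{i}A)e_{j}=e_{i}H^{0}(A)e_{j}=0$ for $j\neq i$, so your component comparison goes through without further assumption.
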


\begin{proof}
By assumption, we have
\[H^{p}(Me_{j})=\left\{\begin{array}{ll}
(T_{i})_{T_{i}}&\mathrm{if}\ j=i\ \mathrm{and}\ p=0,\\

0&\mathrm{otherwise}.
\end{array}\right.\]
It follows that $H^{\ast}(M)$ is a simple module over $H^{\ast}(A)$ with cohomologies concentrated in degree 0.
Then, by the uniqueness result, $M$ is isomorphic to $S_{i}$ in $\mathcal{D}(A)$.
\end{proof}



\section{Proof of Theorem 1.1}

In this section, we give a proof of Theorem 1.1. Precisely, let $A$ be a proper non-positive dg $k$-algebra, for a simple-minded collection $X_{1},\ldots,X_{n}$ of $\mathcal{D}_{fd}(A)$, we will construct a silting object which is 'dual' to $X_{1},\ldots,X_{n}$.

Recall that the simple $H^{0}(A)$-modules $S_{1},\ldots,S_{n}$ is a simple-minded collection of $\mathcal{D}_{fd}(A)$. Let $A^{\ast}=\mathcal{E}nd_{A}(\mathop{\bigoplus}\limits_{j=1}^{n}\mathbf{p}S_{j})$. Then we have the following
result.

\begin{lemma}
$A^{\ast}$ is a cohomologically strictly positive dg algebra and there is a triangle equivalence $\Phi: \mathcal{D}_{fd}(A)\longrightarrow \mathrm{per}(A^{\ast})$ which takes $S_{j}$ to $e_{j}A^{\ast}$ for all $1\leq j\leq n$ and the following formula holds for all $1\leq i,j\leq n$ and $p\in \mathbb{Z}$
\[\Hom _{\mathcal{D}(A^{\ast})}(e_{j}A^{\ast}, \Sigma^{p}\Phi(I_{i}))=\left\{\begin{array}{ll}

(U_{j})_{U_{j}}&\mathrm{if}\ i=j\ \mathrm{and}\ p=0,\\

0&\mathrm{otherwise}.

\end{array}\right. \eqno{(\ast\ast)}\]
\end{lemma}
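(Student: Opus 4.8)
The plan is to identify $A^{\ast}$ as a Koszul-type dual via the dg endomorphism algebra of a $K$-projective resolution of the sum of the simples, and then invoke the general machinery of $K$-projective resolutions together with the uniqueness results recalled in Section~2. First I would set $\mathbf{p}S = \bigoplus_{j=1}^{n}\mathbf{p}S_{j}$, so that $A^{\ast} = \mathcal{E}nd_{A}(\mathbf{p}S)$ as a dg algebra, and observe that $H^{p}(A^{\ast}) = \Hom_{\mathcal{D}(A)}(\mathbf{p}S, \Sigma^{p}\mathbf{p}S) = \Hom_{\mathcal{D}_{fd}(A)}(\bigoplus S_{j}, \Sigma^{p}\bigoplus S_{k})$. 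By property (1) in the definition of a simple-minded collection (applied to $\{S_{1},\ldots,S_{n}\}$), this vanishes for $p<0$, and by property (2) together with the fact that each $\End(S_{i}) = U_{i}$ is a division algebra, $H^{0}(A^{\ast}) = \bigoplus_{i=1}^{n} U_{i}$ is semisimple. This establishes that $A^{\ast}$ is cohomologically strictly positive. The primitive orthogonal idempotents $e_{1},\ldots,e_{n}$ are then the components of the identity coming from the summands $\mathbf{p}S_{j}$, and $e_{j}A^{\ast} = \mathcal{H}om_{A}(\mathbf{p}S, \mathbf{p}S_{j})$; one checks $H^{0}(e_{j}A^{\ast}) = U_{j}$ is simple over $H^{0}(A^{\ast})$, matching the setup of Section~2.4.

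Next I would construct $\Phi$. The natural candidate is $\Phi = \RHom_{A}(\mathbf{p}S, -) = \mathcal{H}om_{A}(\mathbf{p}S, -)$, landing in dg $A^{\ast}$-modules; this is a triangle functor $\mathcal{D}(A) \to \mathcal{D}(A^{\ast})$. The key input is the general Koszul-duality statement (see \cite{Keller94}, and as used in \cite{KellerNicolas11, SuYang19}): since $\{S_{1},\ldots,S_{n}\}$ generates $\mathcal{D}_{fd}(A)$ as a thick subcategory and the $S_{j}$ are sent to the $e_{j}A^{\ast}$ which generate $\mathrm{per}(A^{\ast})$, the functor $\Phi$ restricts to a triangle equivalence $\mathcal{D}_{fd}(A) \xrightarrow{\sim} \mathrm{per}(A^{\ast})$. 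That $\Phi(S_{j}) \cong e_{j}A^{\ast}$ follows by computing $\mathcal{H}om_{A}(\mathbf{p}S, \mathbf{p}S_{j}) = e_{j}A^{\ast}$ directly from the decomposition of $\mathbf{p}S$. I would need to confirm $\Phi$ really does take values in $\mathrm{per}(A^{\ast})$ and not just $\mathcal{D}(A^{\ast})$: this holds because $\mathcal{D}_{fd}(A) = \thick(S_{1},\ldots,S_{n})$ and $\Phi$ preserves triangles and summands, so the image lands in $\thick(e_{1}A^{\ast},\ldots,e_{n}A^{\ast}) = \thick(A^{\ast}_{A^{\ast}}) = \mathrm{per}(A^{\ast})$.

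Finally, formula $(\ast\ast)$ is a direct transport of formula $(\ast)$ from Section~2.4 along the equivalence $\Phi$. Since $\Phi$ is fully faithful and $\Phi(S_{j}) \cong e_{j}A^{\ast}$, we get
\[
\Hom_{\mathcal{D}(A^{\ast})}(e_{j}A^{\ast}, \Sigma^{p}\Phi(I_{i})) \cong \Hom_{\mathcal{D}(A)}(S_{j}, \Sigma^{p}I_{i}),
\]
and the right-hand side is exactly $(U_{j})_{U_{j}}$ when $i=j$, $p=0$ and $0$ otherwise by $(\ast)$; one should check the $U_{j}$-module structure is respected, which it is because $\Phi$ induces an isomorphism of $\End$-modules. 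I expect the main obstacle to be the careful verification that $\Phi$ is an equivalence onto $\mathrm{per}(A^{\ast})$ with the correct compatibilities — in particular handling the fact that the $U_{i}$ may be noncommutative division algebras (this is precisely the generality beyond \cite{SuYang19}), so the one-sided module structures must be tracked throughout rather than glossed over. Everything else reduces to the cohomology computations above and the uniqueness/recognition results already recorded in Section~2.
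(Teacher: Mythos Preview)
Your proposal is correct and follows essentially the same approach as the paper: both invoke Keller's standard result (\cite[Lemma~6.1]{Keller94}) to obtain the triangle equivalence between $\mathcal{D}_{fd}(A)=\thick(S_1,\ldots,S_n)$ and $\per(A^\ast)$, compute $H^p(A^\ast)$ as $\Hom_{\mathcal{D}_{fd}(A)}(\bigoplus_j S_j,\Sigma^p\bigoplus_j S_j)$ to read off the cohomologically strictly positive property from the simple-minded axioms, and then transport $(\ast)$ along $\Phi$ to obtain $(\ast\ast)$. The only cosmetic discrepancy is that the paper writes $\Phi$ as a derived tensor product $?\lten_{A^\ast}(\bigoplus_j\mathbf{p}S_j)$ whereas you use its quasi-inverse $\RHom_A(\mathbf{p}S,-)$; your choice is in fact the one matching the stated direction $\mathcal{D}_{fd}(A)\to\per(A^\ast)$.
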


\begin{proof}
By \cite[Lemma 6.1]{Keller94}, $\Phi=?\mathop{\otimes}\limits_{A^{\ast}}^{\mathbb{L}}(\mathop{\bigoplus}\limits_{j=1}^{n}\mathbf{p}S_{j}): \mathcal{D}_{fd}(A)\longrightarrow \mathrm{per}(A^{\ast})$ is a triangle equivalence which takes $S_{j}$ to $e_{j}A^{\ast}$ for all $1\leq j\leq n$. Thus, there is an isomorphism
\begin{center}
$H^{p}(A^{\ast})=\Hom_{\mathrm{per}(A^{\ast})}(A^{\ast},\Sigma^{p}A^{\ast})\cong\Hom_{\mathcal{D}_{fd}(A)}(\mathop{\bigoplus}\limits_{j=1}^{n}S_{j},\Sigma^{p}\mathop{\bigoplus}\limits_{j=1}^{n}S_{j})$
\end{center}
for all $p\in \mathbb{Z}$. Since $S_{1},\ldots,S_{n}$ is a simple-minded collection of $\mathcal{D}_{fd}(A)$, it follows that $A^{\ast}$ is a cohomologically strictly positive dg algebra. The formula $(\ast\ast)$ follows from the formula $(\ast)$.
\end{proof}

By Lemma 2.4, the formula $(\ast\ast)$ implies that $\Phi(I_{1}),\ldots,\Phi(I_{n})$ are isomorphic to the simple modules over $A^{\ast}$. Thus, by Lemma 2.3, we have
\begin{center}
$\mathrm{thick}_{\mathcal{D}(A^{\ast})}(\Phi(I_{1}),\ldots,\Phi(I_{n}))=D_{fd}(A^{\ast})$.
\end{center}
It follows that the equivalence $\Phi$ restricts to a triangle equivalence
\begin{center}
$\mathrm{thick}_{\mathcal{D}(A)}(D(_{A}A))\rightarrow D_{fd}(A^{\ast})$.
\end{center}
Moreover, we obtain that $\Phi(X_{1}),\ldots,\Phi(X_{n})$ is a simple-minded collection in $\mathrm{per}(A^{\ast})$. By \cite[Lemma 6.1]{Keller94}, there is a triangle equivalence $\Psi=?\mathop{\otimes}\limits_{\mathcal{X}}^{\mathbb{L}}(\mathop{\bigoplus}\limits_{i=1}^{n}\mathbf{p}\Phi(X_{i})): \mathcal{D}(A^{\ast})\longrightarrow \mathcal{D}(\mathcal{X})$ taking $\Phi(X_{i})$ to $e_{i}\mathcal{X}$ for all $1\leq i\leq n$, where $\mathcal{X}=\mathcal{E}nd_{A}(\mathop{\bigoplus}\limits_{i=1}^{n}\mathbf{p}\Phi(X_{i}))$.  By \cite[Lemma 3.1]{KalckYang18a}, it restricts to triangle equivalences $\mathrm{per}(A^{\ast})\longrightarrow \mathrm{per}(\mathcal{X})$ and $\mathcal{D}_{fd}(A^{\ast})\longrightarrow \mathcal{D}_{fd}(\mathcal{X})$, and we obtain the following commutative diagram
$$\centerline{\xymatrix{&\mathcal{D}_{fd}(A)\ar[r]^{\Phi}&\mathrm{per}(A^{\ast})\ar[r]^{\Psi}&\mathrm{per}(\mathcal{X})\\
\mathrm{per}(A)\ar[r]^(.4){\nu}&\mathrm{thick}(D(_{A}A))\ar@{^{(}->}[u]\ar[r]^(.6){\Phi}&\mathrm{D}_{fd}(A^{\ast})\ar@{^{(}->}[u]\ar[r]^{\Psi}&\mathrm{D}_{fd}(\mathcal{X})\ar@{^{(}->}[u]}}$$

Assume that $R_{1},\ldots,R_{n}$ are the endomorphism algebras of the simple-minded collection $X_{1},\ldots,X_{n}$. Let $W_{1},\ldots,W_{n}$ be the simple modules over $\mathcal{X}$ and let $T_{1},\ldots,T_{n}$ be their images under a quasi-inverse of the equivalence $\Psi\circ\Phi$.

\begin{proposition}
$(1)$ $T=\mathop{\bigoplus}\limits_{i=1}^{n}T_{i}$ is a silting object of $\mathrm{thick}(D(_{A}A))$.

$(2)$ For $1\leq i,j\leq n$, and $p=\mathbb{Z}$,
\[\Hom _{\mathcal{D}_{fd}(A)}(X_{j}, \Sigma^{p}T_{i})=\left\{\begin{array}{ll}

(R_{j})_{R_{j}}&\mathrm{if}\ i=j\ \mathrm{and}\ p=0,\\

0&\mathrm{otherwise}.

\end{array}\right.\]

$(3)$ $\nu^{-1}T$ is a silting object of $\mathrm{per}(A)$.

$(4)$ For $1\leq i,j\leq n$, and $p=\mathbb{Z}$,
\[\Hom _{\mathcal{D}_{fd}(A)}(\nu^{-1}T_{i}, \Sigma^{p}X_{j})=\left\{\begin{array}{ll}

_{R_{j}}R_{j}&\mathrm{if}\ i=j\ \mathrm{and}\ p=0,\\

0&\mathrm{otherwise}.

\end{array}\right.\]
\end{proposition}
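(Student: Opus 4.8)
Here is how I would approach it.

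The plan is to transport the data through the chain of triangle equivalences recorded in the commutative diagram above, land in the derived category of the cohomologically strictly positive dg algebra $\mathcal{X}$, and there invoke the silting/simple-module dictionary for cohomologically strictly positive dg algebras (Subsection 2.4) together with Auslander--Reiten duality.

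\emph{Parts (1) and (3).} First I would verify that $\mathcal{X}$ is a proper cohomologically strictly positive dg algebra. Since $\Psi$ carries $\Phi(X_{i})$ to $e_{i}\mathcal{X}$, we have $H^{p}(\mathcal{X})=\Hom_{\mathcal{D}(\mathcal{X})}(\mathcal{X},\Sigma^{p}\mathcal{X})\cong\bigoplus_{i,j}\Hom_{\mathcal{D}_{fd}(A)}(X_{i},\Sigma^{p}X_{j})$; this vanishes for $p<0$ because $X_{1},\dots,X_{n}$ is a simple-minded collection, equals $\bigoplus_{i}R_{i}$ for $p=0$ (hence is semisimple), and is finite-dimensional because the $X_{i}$ lie in $\mathcal{D}_{fd}(A)$. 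So Subsection 2.4 applies to $\mathcal{X}$: the simple $\mathcal{X}$-modules $W_{1},\dots,W_{n}$ are defined, with $H^{\ast}(W_{i})\cong H^{0}(e_{i}\mathcal{X})$, and by Lemma 2.3 the object $\bigoplus_{i}W_{i}$ is silting in $\mathcal{D}_{fd}(\mathcal{X})$. By the lower row of the diagram, $\Psi\circ\Phi$ restricts to a triangle equivalence $\mathrm{thick}(D(_{A}A))\xrightarrow{\sim}\mathcal{D}_{fd}(\mathcal{X})$, and by construction $T=\bigoplus_{i}T_{i}$ is the image of $\bigoplus_{i}W_{i}$ under its quasi-inverse; since triangle equivalences preserve silting objects, $T$ is silting in $\mathrm{thick}(D(_{A}A))$, which is (1). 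For (3), $\nu\colon\mathrm{per}(A)\to\mathrm{thick}(D(_{A}A))$ is a triangle equivalence, so $\nu^{-1}T$ is silting in $\mathrm{per}(A)$.

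\emph{Part (2).} Both $X_{j}$ and $T_{i}$ lie in $\mathcal{D}_{fd}(A)$ (recall $\mathrm{thick}(D(_{A}A))\subseteq\mathcal{D}_{fd}(A)$), so I would apply the top-row equivalence $\Psi\circ\Phi\colon\mathcal{D}_{fd}(A)\xrightarrow{\sim}\mathrm{per}(\mathcal{X})$, which (compatibly with the vertical inclusions of the diagram) sends $X_{j}$ to $e_{j}\mathcal{X}$ and $T_{i}$ to $W_{i}$; thus $\Hom_{\mathcal{D}_{fd}(A)}(X_{j},\Sigma^{p}T_{i})\cong\Hom_{\mathcal{D}(\mathcal{X})}(e_{j}\mathcal{X},\Sigma^{p}W_{i})$. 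As $e_{j}\mathcal{X}$ is a direct summand of the free module $\mathcal{X}$, it is K-projective, so the right-hand side is $H^{p}(W_{i})e_{j}$: this vanishes for $p\neq0$ and, for $p=0$, equals $H^{0}(e_{i}\mathcal{X})e_{j}=e_{i}H^{0}(\mathcal{X})e_{j}$. Computing the latter inside $H^{0}(\mathcal{X})\cong\bigoplus_{l}R_{l}$ gives $R_{i}$ for $i=j$ and $0$ for $i\neq j$, and with respect to the action of $\End_{\mathcal{D}(\mathcal{X})}(e_{j}\mathcal{X})$, which we identify with $R_{j}$, this is the regular right module $(R_{j})_{R_{j}}$. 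Carrying the $R_{j}$-identifications back along $\Psi\circ\Phi$ yields (2).

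\emph{Part (4).} Finally I would apply the Auslander--Reiten formula $D\Hom(M,N)\cong\Hom(N,\nu M)$ with $M=\nu^{-1}T_{i}\in\mathrm{per}(A)$ and $N=\Sigma^{p}X_{j}\in\mathcal{D}_{fd}(A)$: this gives $D\Hom_{\mathcal{D}_{fd}(A)}(\nu^{-1}T_{i},\Sigma^{p}X_{j})\cong\Hom_{\mathcal{D}(A)}(\Sigma^{p}X_{j},T_{i})\cong\Hom_{\mathcal{D}_{fd}(A)}(X_{j},\Sigma^{-p}T_{i})$, which by (2) is $(R_{j})_{R_{j}}$ for $i=j$ and $p=0$, and $0$ otherwise. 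Taking $k$-duals back, and using that for the finite-dimensional division algebra $R_{j}$ the left $R_{j}$-module $D((R_{j})_{R_{j}})$ is nonzero of $k$-dimension $\dim_{k}R_{j}$, hence free of rank one, that is, isomorphic to ${}_{R_{j}}R_{j}$, we obtain (4). The argument is essentially an assembly of Lemma 2.3, the simple-module description of Subsection 2.4, the commutative diagram of equivalences, and Auslander--Reiten duality, so I do not anticipate a deep difficulty; the steps demanding the most care are the verification that $\mathcal{X}$ is proper and cohomologically strictly positive, so that the simple-module machinery is available, and the bookkeeping of the one-sided $R_{j}$-module structures through the successive equivalences and the $k$-dual, the observation $D((R_{j})_{R_{j}})\cong{}_{R_{j}}R_{j}$ for a division algebra being exactly what produces the regular \emph{left} module demanded in (4), and hence in Theorem 1.1.
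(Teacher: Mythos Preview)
Your proof is correct and follows essentially the same approach as the paper: transport along $\Psi\circ\Phi$ to $\mathcal{D}(\mathcal{X})$, invoke Lemma~2.3 for the silting statement, compute $\Hom(e_{j}\mathcal{X},\Sigma^{p}W_{i})$ directly for part~(2), and deduce (3) and (4) via $\nu$ and the Auslander--Reiten formula. Your version is in fact more explicit than the paper's in two places---you spell out why $\mathcal{X}$ is cohomologically strictly positive (the paper leaves this implicit when it speaks of ``the simple modules over $\mathcal{X}$''), and you track the left/right $R_{j}$-module structures through the $k$-dual in part~(4)---but the architecture is identical.
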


\begin{proof}
(1) By Lemma $2.3$, $W=\mathop{\bigoplus}\limits_{i=1}^{n}W_{i}$ is a silting object of $\mathcal{D}_{fd}(\mathcal{X})$. Then $T$ is a silting object of $\mathrm{thick}(D(_{A}A))$ because $\Psi\circ\Phi$ is a triangle equivalence.

(2) The triangle equivalence $\Psi\circ\Phi$ sends $X_{i}$ to $e_{i}\mathcal{X}$ for all $1\leq i\leq n$. This shows that the endomorphism algebras of $e_{i}\mathcal{X}$ are $R_{i}$. Then for $1\leq i,j\leq n$ and $p\in\mathbb{Z}$, we obtain that
\[\Hom _{\mathcal{D}_{fd}(\mathcal{X})}(e_{j}\mathcal{X}, \Sigma^{p}W_{i})=\left\{\begin{array}{ll}

(R_{j})_{R_{j}}&\mathrm{if}\ i=j\ \mathrm{and}\ p=0,\\

0&\mathrm{otherwise}.

\end{array}\right.\]

(3) This follows from (1) because $\nu: \mathrm{per}(A)\longrightarrow \mathrm{thick}(D(_{A}A))$ is a triangle equivalence.

(4) This follows by Auslander-Reiten formula and (2).
\end{proof}


\begin{proof}[Proof of Theorem 1.1] Take $M=\nu^{-1}T$. By Proposition 3.2 (3) and (4), it remains to show the uniqueness of $M$.






Let $N=\mathop{\bigoplus}\limits_{i=1}^{n}N_{i}$ be an object of $\mathrm{per}(A)$ such that for $1 \leq i, j \leq r$ and $p \in \mathbb{Z}$
\[\Hom _{\mathcal{D}_{fd}(A)}(N_{i}, \Sigma^{p}X_{j})=\left\{\begin{array}{ll}

_{R_{j}}R_{j}&\mathrm{if}\ i=j\ \mathrm{and}\ p=0,\\

0&\mathrm{otherwise}.

\end{array}\right.\]
Then by the Auslander-Reiten formula we have
\[\Hom _{\mathcal{D}_{fd}(A)}(X_{j},\Sigma^{p}\nu N_{i})=\left\{\begin{array}{ll}

(R_{j})_{R_{j}}&\mathrm{if}\ i=j\ \mathrm{and}\ p=0,\\

0&\mathrm{otherwise}.

\end{array}\right.\]
Applying the triangle equivalence $\Psi\circ\Phi$ we obtain
\[\Hom _{\mathcal{D}_{fd}(\mathcal{X})}(e_{j}\mathcal{X},\Sigma^{p}\Psi\circ\Phi\circ\nu N_{i})=\left\{\begin{array}{ll}

(R_{j})_{R_{j}}&\mathrm{if}\ i=j\ \mathrm{and}\ p=0,\\

0&\mathrm{otherwise}.

\end{array}\right.\]
Therefore, by Lemma 2.4, we have $\Psi\circ\Phi\circ\nu N_{i}\cong W_{i}$. Thus $N_{i}\cong M_{i}$.
\end{proof}

As a consequence, we obtain the following result. It is similar to \cite[Theorem 5.5]{SuYang19}, so we omit the proof.

\begin{theorem}
Let $A$ be a proper non-positive dg algebra over the field $k$. Then their are one-to-one correspondences which commute with mutation and which preserve partial orders between

(1) equivalence classes of silting objects in $\mathrm{per}(A)$,

(2) isomorphism classes of simple-minded collections in $\mathcal{D}_{fd}(A)$,

(3) algebraic t-structures of $\mathcal{D}_{fd}(A)$, i.e. bounded t-structures of $\mathcal{D}_{fd}(A)$ with length heart,

(4) bounded co-t-structures of $\mathrm{per}(A)$.
\end{theorem}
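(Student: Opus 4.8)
The plan is to build all four correspondences by concatenating known bijections and then to run Theorem 1.1 as the bridge between the silting side and the simple-minded side. First I would recall the classical input: by \cite{KoenigYang14,BuanReitenThomas12} (and \cite{Al-Nofayee09,Schnurer13}), on the side of $\mathcal{D}_{fd}(A)$ there is already a bijection between isomorphism classes of simple-minded collections (2) and algebraic $t$-structures (3), given by sending a simple-minded collection $X_1,\ldots,X_n$ to the $t$-structure whose aisle is $\mathrm{thick}$-generated (via extensions and positive shifts) by the $X_i$; its heart is a length category with simple objects the $X_i$. Dually, by Bondarko's theory and \cite{MendozaSaenzSantosSouto}/\cite{KoenigYang14}, on the side of $\mathrm{per}(A)$ there is a bijection between equivalence classes of silting objects (1) and bounded co-$t$-structures (4), sending a silting object $M$ to the co-$t$-structure with co-aisle built from $M$ by extensions and non-negative shifts, with coheart $\mathrm{add}(M)$. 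So (1)$\leftrightarrow$(4) and (2)$\leftrightarrow$(3) are formal once the dictionary is set up; the content is the link (1)$\leftrightarrow$(2).

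Next I would establish (1)$\leftrightarrow$(2) directly from Theorem 1.1. In one direction, given a simple-minded collection $\{X_1,\ldots,X_n\}$ of $\mathcal{D}_{fd}(A)$, Theorem 1.1 produces a silting object $M=M_1\oplus\cdots\oplus M_n$ of $\mathrm{per}(A)$ satisfying the orthogonality formula $\Hom_{\mathcal{D}_{fd}(A)}(M_i,\Sigma^p X_j)= {}_{R_j}R_j$ if $i=j,p=0$ and $0$ otherwise, and it is unique up to isomorphism with this property. For the reverse direction I would invoke the analogous construction already available in the literature under the name ``simple-minded collection attached to a silting object'': given a silting object $M$, one forms its simple-minded collection $\{Y_1,\ldots,Y_n\}$ characterised by the same orthogonality relations with the roles of $\per(A)$ and $\mathcal{D}_{fd}(A)$ interchanged, using the duality $\nu$; this is exactly the construction of \cite{KoenigYang14} transported through the Koszul-dual picture of Section 3, or one may simply run the argument of Section 3 with the roles of $A$ and its dg-Koszul dual swapped. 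The two assignments are mutually inverse because in both cases the resulting object is pinned down uniquely by the orthogonality formula — that is precisely the uniqueness clause of Theorem 1.1 and its dual — so composing one with the other returns an object satisfying the defining property of the identity, hence isomorphic to the original.

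Finally I would check the two ``structural'' clauses: compatibility with mutation and with partial orders. For partial orders: on silting objects one has $M\geq M'$ iff $\Hom(M,\Sigma^{>0}M')=0$; on simple-minded collections and $t$-structures the partial order is inclusion of aisles; and under the correspondence of Section 3 (where silting objects of $\per(A)$ go to co-$t$-structures and, via $\nu$ and $\Phi,\Psi$, to $t$-structures on $\mathcal{D}_{fd}(A)$) these inequalities translate into one another because $\nu$, $\Phi$ and $\Psi$ are triangle equivalences and the Auslander--Reiten formula converts the vanishing $\Hom(M,\Sigma^{>0}M')=0$ into the corresponding vanishing on the $X$-side. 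For mutation: silting mutation at a summand $M_i$ is defined by an exchange triangle, and its image under a triangle equivalence is again an exchange triangle, so it suffices to know that the simple-minded-collection mutation of \cite{KoenigYang14} corresponds to silting mutation under our bijection; since both mutations are characterised by how they change a single summand while preserving the orthogonality pattern, the uniqueness in Theorem 1.1 forces them to match. I expect the main obstacle to be bookkeeping: carefully fixing the conventions so that the four partial orders and the two notions of mutation line up with the directions chosen in Section 3 (in particular which of $\per$/$\mathcal{D}_{fd}$ carries the $t$-structure versus the co-$t$-structure, and which way $\nu$ points), after which each verification is a short diagram chase using that $\Phi$, $\Psi$ and $\nu$ are equivalences. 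Since this is entirely parallel to \cite[Theorem 5.5]{SuYang19}, with Theorem 1.1 replacing \cite[Theorem 1.1]{SuYang19}, the argument can be left to the reader.
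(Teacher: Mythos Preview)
Your proposal is correct and matches the paper's approach: the paper does not give a proof at all, stating only that the argument is similar to \cite[Theorem 5.5]{SuYang19} and omitting the details, and your sketch is precisely a fleshed-out version of that referral, with Theorem 1.1 supplying the bridge $(1)\leftrightarrow(2)$ and the remaining bijections and compatibilities imported from \cite{KoenigYang14} as you indicate.
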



\def\cprime{$'$}
\providecommand{\bysame}{\leavevmode\hbox to3em{\hrulefill}\thinspace}
\providecommand{\MR}{\relax\ifhmode\unskip\space\fi MR }
\providecommand{\MRhref}[2]{%
  \href{http://www.ams.org/mathscinet-getitem?mr=#1}{#2}
}
\providecommand{\href}[2]{#2}

\end{document}